\renewcommand{\phi}{\varphi}
\newcommand{\C}{\mathbb{C}}
\newcommand{\SSS}{\mathcal{S}}
\newcommand{\D}{\mathbb{D}}
\newcommand{\T}{\mathbb{T}}
\renewcommand{\vec}[1]{{\bf #1}}
\newcommand{\vecspan}{\operatorname{span}}
\newcommand{\nullity}{\operatorname{nullity}}
\newcommand{\minimatrix}[4]{\begin{bmatrix} #1 & #2 \\ #3 & #4 \end{bmatrix}  }
\newcommand{\rank}{\operatorname{rank}}
\newtheorem{theorem}{Theorem}
\newtheorem{lemma}[theorem]{Lemma}
\newtheorem{proposition}[theorem]{Proposition}
\theoremstyle{definition}
\theoremstyle{remark}
\begin{document}
	\title{Matrices similar to partial isometries}
	\date{\today}

	\author{Stephan Ramon Garcia}
	\address{Department of Mathematics, Pomona College, 610 N. College Ave., Claremont, CA 91711} 
	\email{stephan.garcia@pomona.edu}
	\urladdr{\url{http://pages.pomona.edu/~sg064747}}
	
	\author{David Sherman}
	\address{Department of Mathematics, University of Virginia, P.O. Box 400137, Charlottesville, VA 22904-4137}
	\email{dsherman@virginia.edu}
	\urladdr{\url{http://people.virginia.edu/~des5e/}}

\begin{abstract}
We determine when a matrix is similar to a partial isometry, refining a result of Halmos--McLaughlin. 
\end{abstract}

\keywords{Partial isometry; partially isometric matrix; similarity; Jordan form}
\thanks{The authors acknowledge the support of NSF grants DMS-1265973 and DMS-1201454.}	
	    \subjclass[2010]{15A21}

\maketitle

\section{Introduction}

A Hilbert space operator $V$ is a \emph{partial isometry} if the restriction of $V$
to $(\ker V)^{\perp}$ is isometric.  For a complex matrix, this means that all of its singular values are in $\{0,1\}$,
or in other words, the positive semidefinite factor in its
polar decomposition is an orthogonal projection.
These properties are not preserved by similarity; for example 
\begin{equation}\label{eq:TwoExamples}
    \begin{bmatrix}
        0 & \frac{ \sqrt{3} }{2} \\[2pt]
        0 & \frac{1}{2}
    \end{bmatrix}   
    \quad\text{and}\quad \minimatrix{0}{0}{0}{\frac{1}{2}} 
\end{equation}
are similar, since both matrices have the same Jordan canonical form.  
The first is a partial isometry since its nonzero columns are orthonormal, but the second is not.  Which matrices are similar to a partial isometry?

The basic features of partial isometries were laid out over fifty years ago \cite{Halmos, Erdelyi, Hearon}, and the similarity question is not new -- but most work has focused on the (still unresolved) infinite-dimensional case, e.g., \cite{Fialkow}.  
In the finite-dimensional case, the best result was a theorem of Halmos--McLaughlin stating that the characteristic polynomial of a nonunitary partial isometry can be any monic polynomial whose roots lie in the closed unit disk and include zero \cite[Theorem 3]{Halmos}.  
The referee pointed out that this can be deduced directly from the Weyl--Horn inequalities
\cite{Horna,Weyl}, which say that there exists an $n \times n$ matrix with prescribed singular values
$\sigma_1 \geq \sigma_2 \geq \cdots \sigma_n \geq 0$ and eigenvalues $\lambda_1,\lambda_2,\ldots,\lambda_n$,
indexed so that $|\lambda_1| \geq |\lambda_2| \geq \cdots \geq |\lambda_n|$, if and only if
\begin{equation*}
    \sigma_1 \sigma_2 \cdots \sigma_k \geq |\lambda_1 \lambda_2 \cdots \lambda_k| \qquad \text{for }k = 1,2,\ldots,n-1 \end{equation*}
and
\begin{equation*}
\sigma_1 \sigma_2 \cdots \sigma_n = |\lambda_1 \lambda_2 \cdots \lambda_n|.
\end{equation*}
For an $n \times n$ partial isometry of rank $r<n$, we have
\begin{equation*}
\sigma_1 = \sigma_2 = \cdots = \sigma_r = 1 \qquad \text{and}\qquad  \sigma_{r+1} = \cdots = \sigma_n = 0.
\end{equation*}
Hence
any $n$ points (with possible repetition) in the closed unit disk can be its eigenvalues,
so long as at least $n-r$ of them are zero.

Of course the characteristic polynomial is a similarity invariant for matrices, but not a complete one.  Here we go further and determine the possible Jordan forms of a partially isometric matrix.
One interesting feature of this question is that the property ``similar to a partial isometry'' does not pass to direct 
summands: the $1 \times 1$ matrix $[ \frac{1}{2} ]$, a direct summand of the second matrix in \eqref{eq:TwoExamples}, is not similar to a partial isometry.

In what follows, $\D$, $\D^-$, $\T$ denote the open unit disk, the closed unit disk, and the unit circle, respectively.  We write $\sim$ for similarity and $\cong$ for unitary similarity.
The spectrum of a matrix $A \in M_n$ is denoted $\sigma(A)$;
here $M_n$ denotes the set of $n \times n$ complex matrices.
We let $J_n(\lambda)$ denote the $n \times n$ Jordan matrix with eigenvalue $\lambda$, and we remind the reader that the nullity of $A - \lambda I$ is exactly the number of Jordan blocks for the eigenvalue $\lambda$.

    \begin{theorem}\label{Theorem:PI}
        $A \in M_n$ is similar to a partial isometry if and only if the following conditions hold:
        \begin{enumerate}\addtolength{\itemsep}{0.5\baselineskip}
            \item $\sigma(A) \subseteq \D^-$;
            \item if $\zeta \in \sigma(A) \cap \T$, then its algebraic and geometric multiplicities are equal;
            \item $\nullity A \geq \nullity (A - \lambda I)$ for each $\lambda \in \sigma(A) \cap \D$.
        \end{enumerate}
    \end{theorem}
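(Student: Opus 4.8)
My plan is to deduce necessity straight from the definition of a partial isometry and to prove sufficiency by explicitly building a partial isometry with the prescribed Jordan form. For necessity, write $A \sim V$ with $V$ a partial isometry, put $M = (\ker V)^{\perp}$, and let $P$ be the orthogonal projection onto $M$, so that $V|_M$ is isometric and $\norm{Vx} = \norm{Px}$ for every $x$. Since $\norm{V} \le 1$, the spectral radius of $V$ (hence of $A$) is at most $1$, which is (a). Because $\norm{V^n} \le 1$ for all $n$, $V$ is power bounded; power boundedness is a similarity invariant, whereas $\norm{J_p(\zeta)^n} \to \infty$ when $|\zeta| = 1$ and $p \ge 2$, so no unimodular eigenvalue of $V$ (hence of $A$) can occur in a Jordan block of size $\ge 2$, which is (b). For (c): if $Vx = \lambda x$ with $x \ne 0$ and $\lambda \in \D$, then $\norm{Px} = |\lambda|\,\norm{x} < \norm{x}$, so $x$ has a nonzero component in $\ker V$; hence $x \mapsto (I-P)x$ is injective on $\ker(V - \lambda I)$, giving $\nullity(V - \lambda I) \le \nullity V$.

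For sufficiency, note that a direct sum of partial isometries is a partial isometry, while $[\zeta]$ (with $|\zeta| = 1$) and the nilpotent Jordan block $J_k(0)$ (a truncated shift) are partial isometries; so it suffices, given $A$ satisfying (a)--(c), to assemble a partial isometry with $A$'s Jordan form from such pieces. Group the Jordan blocks of $A$ into the unimodular blocks $J_1(\zeta)$, the $n_0 := \nullity A$ blocks for the eigenvalue $0$, and the blocks for the nonzero eigenvalues lying in $\D$. By (c), each nonzero eigenvalue $\nu \in \D$ has at most $n_0$ Jordan blocks, so for each $\nu$ in turn we may assign its blocks injectively to labels in $\{1, \dots, n_0\}$; afterwards, label $i$ has collected a (possibly empty) family of Jordan blocks with pairwise distinct nonzero eigenvalues, which — together with the $i$-th zero block $J_{k_i}(0)$ — I want to realize by a single partial isometry. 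Thus the theorem reduces to the claim: \emph{if $k \ge 1$, $m_1, \dots, m_t \ge 1$, and $\mu_1, \dots, \mu_t \in \D \setminus \{0\}$ are distinct, then $J_k(0) \oplus \bigoplus_{j=1}^{t} J_{m_j}(\mu_j)$ is similar to a partial isometry.} (Direct-summing the resulting partial isometries over $i$, together with the blocks $[\zeta]$, then reproduces the Jordan form of $A$.)

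To prove the claim I would use the normal form for corank-one partial isometries: any $V$ on $\C^N$ with $\dim\ker V = 1$ equals $U - (Ue)e^{*}$ for some unitary $U$ and unit vector $e$ (extend the isometry $V|_{(\ker V)^{\perp}}$ to a unitary $U$), and conversely the identity $V^*V = I - ee^*$ shows every such matrix is a corank-one partial isometry. Diagonalizing $U = \diag(\omega_1, \dots, \omega_N)$, we get $V = \diag(\omega_i) - fe^*$ with $f = Ue$, a rank-one perturbation of a diagonal matrix; if the $\omega_i$ are distinct and $e$ has no zero entry, then $V$ is non-derogatory, so its Jordan form is determined by its characteristic polynomial. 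The matrix determinant lemma together with $\sum_i |e_i|^2 = 1$ gives $\det(\lambda I - V) = \lambda \sum_i |e_i|^2 \prod_{l \ne i}(\lambda - \omega_l)$, so the nonzero eigenvalues of $V$ are exactly the zeros of $h(\lambda) := \sum_i |e_i|^2 / (\omega_i - \lambda)$, while the multiplicity of the eigenvalue $0$ is one more than the multiplicity of $0$ as a zero of $h$. It therefore remains to choose distinct $\omega_i \in \T$ and weights $p_i := |e_i|^2 > 0$ with $\sum_i p_i = 1$ so that $h$ has a zero of order $k-1$ at $0$ and of order $m_j$ at each $\mu_j$; this is the main obstacle.

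My plan for that step is a Blaschke-product device. Let $B(\lambda) = \lambda^{k} \prod_{j} \bigl( \tfrac{\lambda - \mu_j}{1 - \bar\mu_j \lambda} \bigr)^{m_j}$, the degree-$N$ finite Blaschke product whose zeros are precisely those wanted for $\lambda h(\lambda)$; take $\omega_1, \dots, \omega_N$ to be the $N$ distinct unimodular solutions of $B(\omega) = 1$, and set $p_i = 1 / \bigl( \omega_i B'(\omega_i) \bigr)$. Since $\omega B'(\omega)/B(\omega) = k + \sum_j m_j\,\tfrac{1 - |\mu_j|^2}{|\omega - \mu_j|^2}$ is real and positive on $\T$, the $p_i$ are positive; and comparing partial fractions shows $\sum_i p_i\omega_i/(\omega_i - \lambda) = 1/(1 - B(\lambda))$, which forces $\sum_i p_i = 1/(1 - B(0)) = 1$ and $\lambda h(\lambda) = B(\lambda)/(1 - B(\lambda))$, so $\lambda h$ has exactly the zeros of $B$, as required. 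This manufactures a corank-one partial isometry with Jordan form $J_k(0) \oplus \bigoplus_j J_{m_j}(\mu_j)$, completing the proof. The heart of the matter is exactly this: realizing prescribed eigenvalues inside the disk by a rank-one perturbation of a diagonal unitary forces constraints on the unimodular "frequencies'' and positive "weights,'' and the Blaschke product is what makes both the positivity and the normalization fall out automatically.
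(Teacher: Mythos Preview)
Your argument is correct, and both the necessity and sufficiency halves differ from the paper's.  For necessity the paper obtains (b) by Schur-triangularizing $V$ so that an $m\times m$ block $V_{11}$ has diagonal $(\zeta,\dots,\zeta)$, and then uses that every row and column of a contraction has norm $\le 1$ to force $V_{11}=\zeta I_m$; you instead invoke power-boundedness, which is shorter.  For (c) the paper writes $V=UP$ with $U$ unitary and $P$ a projection and uses rank subadditivity on $U-\lambda I=(V-\lambda I)+U(I-P)$; your eigenvector argument, projecting $\ker(V-\lambda I)$ injectively into $\ker V$ via $I-P$, is a clean alternative.

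For sufficiency the reduction is the same in both proofs: strip off the unimodular $1\times 1$ blocks, use (c) to distribute the nonzero-eigenvalue blocks among the $\nullity A$ zero blocks, and reduce to realizing $J_k(0)\oplus\bigoplus_j J_{m_j}(\mu_j)$ (distinct $\mu_j\in\D\setminus\{0\}$) by a partial isometry.  From here the constructions diverge.  The paper builds an upper-triangular partial isometry by a Gram--Schmidt induction (appending one orthonormal column at a time) so that the diagonal is $(0,\dots,0,\mu_1,\dots,\mu_1,\dots)$ and every superdiagonal entry is nonzero; a block-triangular similarity then collapses it to the desired Jordan form.  You instead model a corank-one partial isometry as $U-(Ue)e^{*}$, diagonalize $U$, and use the Blaschke product $B(\lambda)=\lambda^{k}\prod_j\bigl(\tfrac{\lambda-\mu_j}{1-\bar\mu_j\lambda}\bigr)^{m_j}$ to pick the unimodular nodes $\omega_i$ and weights $p_i$.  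The identity $1+\lambda h(\lambda)=1/(1-B(\lambda))$ that you derive indeed makes the characteristic polynomial come out to $\lambda^{k}\prod_j(\lambda-\mu_j)^{m_j}$, and the non-derogatory check is straightforward since every eigenvalue lies in $\D$ and hence avoids the $\omega_i$.  The paper's route is more elementary and self-contained (pure linear algebra, no function theory), while yours gives an explicit closed-form partial isometry from the spectral data and ties the problem to finite Blaschke products and rank-one perturbations of unitaries---essentially producing the Clark/CMV model of a corank-one contraction with prescribed characteristic function $B$.
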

    
It is classical (e.g., \cite[Lemma 1]{Hearon}) that partial isometries are exactly the operator solution set of the *-polynomial equation $xx^*x - x = 0.$  After proving Theorem \ref{Theorem:PI}, we make a few remarks on similarity orbits of such ``noncommutative *-varieties.''

\section{Proof of Theorem \ref{Theorem:PI}}
     Before proving Theorem \ref{Theorem:PI}, we prepare a few lemmas.  The first is a variation of the Halmos--McLaughlin result quoted earlier \cite[Theorem 3]{Halmos}, restricting to tuples in $\D$ and obtaining the extra condition (c).  

    \begin{lemma}\label{Lemma:Superdiagonal}
        For any $\xi_1,\xi_2,\ldots, \xi_{n-1} \in \D$, there
        exists an upper triangular partial isometry 
        $V = [ \vec{0}\,\, \vec{v}_2 \,\,\vec{v}_3 \,\,\ldots\,\, \vec{v}_{n-1} ] \in M_n$ such that 
        \begin{enumerate}\addtolength{\itemsep}{0.5\baselineskip}
            \item the diagonal of $V$ is $(0,\xi_1,\xi_2,\ldots, \xi_{n-1})$;
            \item the columns $\vec{v}_2, \vec{v}_3,\ldots, \vec{v}_{n-1} \in \C^n$ are orthonormal;
            \item the entries of $V$ on the first superdiagonal are all nonzero.
        \end{enumerate}
    \end{lemma}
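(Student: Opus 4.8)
The plan is to build $V$ column by column (equivalently, row by row) so that the upper-triangular shape, the prescribed diagonal, and the nonvanishing superdiagonal are all transparent, and then verify orthonormality of the nonzero columns at the end. Write $V = [\vec{0}\,\,\vec{v}_2\,\,\ldots\,\,\vec{v}_n]$ where I now index so that $\vec{v}_j \in \C^n$ is the $j$th column. Condition (a) asks that $(\vec{v}_j)_j = \xi_{j-1}$ for $j = 2,\ldots,n$, and upper-triangularity asks that $(\vec{v}_j)_i = 0$ for $i > j$. The key observation is that since each $|\xi_{j-1}| < 1$, there is "room left over" in the $j$th column to make it a unit vector while keeping it orthogonal to all earlier columns: the earlier columns span at most a $(j-2)$-dimensional subspace of the $(j-1)$-dimensional coordinate subspace $\C^{j-1} \times \{0\}^{n-j+1}$ (I only have $\vec{v}_2,\ldots,\vec{v}_{j-1}$, that's $j-2$ vectors), so its orthogonal complement inside $\C^j \times \{0\}^{n-j}$ has dimension at least $2$. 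I will choose $\vec{v}_j$ in that complement with $j$th coordinate exactly $\xi_{j-1}$ and the remaining "slack" $1 - |\xi_{j-1}|^2 > 0$ distributed so that the superdiagonal entry $(\vec{v}_j)_{j-1}$ is nonzero.

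More precisely, I proceed by induction on $j$. Suppose $\vec{v}_2,\ldots,\vec{v}_{j-1}$ have been chosen: orthonormal, each supported on its first $j-1$ coordinates, with the required diagonal and with nonzero superdiagonal entries so far. Inside $W_j := \C^j \times \{0\}^{n-j}$ (dimension $j$), the span of $\vec{v}_2,\ldots,\vec{v}_{j-1}$ has dimension $j-2$, so $W_j \ominus \vecspan\{\vec{v}_2,\ldots,\vec{v}_{j-1}\}$ has dimension $2$; pick an orthonormal basis $\{\vec{e},\vec{f}\}$ of it. I want $\vec{v}_j = \alpha \vec{e} + \beta \vec{f}$ with $|\alpha|^2 + |\beta|^2 = 1$, with $j$th coordinate equal to $\xi_{j-1}$, and with nonzero $(j-1)$st coordinate. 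The $j$th-coordinate constraint is a single linear equation $\bar{\alpha}\, \overline{e_j} + \bar\beta\, \overline{f_j}$... more simply, $\alpha e_j + \beta f_j = \xi_{j-1}$; since not both $e_j, f_j$ vanish (else $\vec{e},\vec{f}$ would lie in $W_{j-1}$, forcing that span to have dimension $\le j-1$ inside $W_{j-1}$ while containing the $(j-2)$-dimensional old span plus two more orthonormal vectors — contradiction), the solution set of this equation intersected with the unit sphere of $\vecspan\{\vec{e},\vec{f}\}$ is a nonempty circle (using $|\xi_{j-1}|<1$) with at least one point having nonzero $(j-1)$st coordinate — indeed the map $(\alpha,\beta) \mapsto$ $(j-1)$st coordinate is a nonzero linear functional on this circle unless $\vec e,\vec f$ both vanish in coordinate $j-1$, which would again contradict the dimension count. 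Choose such a $\vec{v}_j$.

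The one point needing a little care is the very start of the induction and making the "not both coordinates vanish" arguments uniform; I would handle $j=2$ directly ($\vec v_2 = \xi_1 \vec{e}_2 + \sqrt{1-|\xi_1|^2}\,\vec{e}_1$, which manifestly has nonzero superdiagonal entry since $|\xi_1|<1$) and then run the inductive step for $j \geq 3$. Once all columns are built, orthonormality of $\vec{v}_2,\ldots,\vec{v}_n$ is automatic from the construction, so $V$ is a partial isometry with $\ker V \supseteq \C\vec{e}_1$; upper-triangularity, the diagonal, and the nonzero superdiagonal hold by construction. The main obstacle — really the only substantive point — is confirming at each stage that the two linear constraints ("$j$th coordinate $=\xi_{j-1}$" and "$(j-1)$st coordinate $\neq 0$") can be met simultaneously on the unit sphere of the $2$-dimensional slack space; this is exactly where the hypothesis $\xi_{j-1} \in \D$ (strict inequality) is used, and the dimension bookkeeping above is what makes it go through.
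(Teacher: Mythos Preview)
Your column-by-column induction is essentially the paper's proof: the paper frames it as $M_n \to M_{n+1}$, chooses the new top block $\vec w$ in the one-dimensional complement of the old nonzero columns inside $\C^n$, and shows the new superdiagonal entry is \emph{automatically} nonzero by observing that otherwise the first $n-1$ rows of the last $n$ columns would give $n$ nonzero pairwise orthogonal vectors in $\C^{n-1}$. One caution on your sketch: the phrase ``would again contradict the dimension count'' for the $(j-1)$st coordinate is not literally the same count as for the $j$th coordinate, because $\vec v_{j-1}$ need not lie in $\{x_{j-1}=0\}$; the correct version is that $\vec v_2,\dots,\vec v_{j-2},\vec e,\vec f$ would then be $j-1$ orthonormal vectors spanning $\{x\in W_j : x_{j-1}=0\}$, forcing $\vec v_{j-1}$ (orthogonal to all of them) to equal $c\,\vec e_{j-1}$ with $|c|=1$, contradicting $|\xi_{j-2}|<1$.
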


    \begin{proof}
        We proceed by induction on $n$.  In the base case $n = 2$,
        \begin{equation*}
            V = \minimatrix{ 0}{ \sqrt{1 - | \xi_1|^2} }{0}{ \xi_1}
        \end{equation*}
        is a partial isometry with the desired properties.  Now suppose that the theorem has been
        proved for some $n\geq 2$.  Given $\xi_1,\xi_2,\ldots, \xi_{n} \in \D$,
        there exists a partial isometry 
        $W = [ \vec{0}\,\, \vec{w}_2 \,\,\vec{w}_3 \,\,\ldots\,\, \vec{w}_{n-1} ] \in M_n$ of the form
        \begin{equation*}
            W =
            \begin{bmatrix}
                0 & w_{1,2} & w_{1,3} & \cdots & w_{1,n} \\
                0 & \xi_{1} & w_{2,3} & \cdots & w_{2,n} \\
                0 & 0 & \xi_2 & \cdots & w_{3,n} \\[-2pt]
                0 & 0 & 0 & \ddots & \vdots \\
                0 & 0 & 0 & \cdots & \xi_{n-1} \\
            \end{bmatrix}
        \end{equation*}
        such that the columns $\vec{w}_2, \vec{w}_3,\ldots, \vec{w}_{n-1} \in \C^{n}$ are orthonormal.
        Now select a vector $\vec{w}_{n} \in \C^n$ of norm
        $\sqrt{1 - | \xi_n|^2}$ that is orthogonal to $\vecspan\{ \vec{w}_1, \vec{w}_2,\ldots, \vec{w}_{n-1}\}$.  
        Let
        \begin{equation*}
            V = \minimatrix{W}{ \vec{w}_n}{0}{ \xi_n} \in M_{n+1}.
        \end{equation*}
        Since the nonzero columns of $V$ are orthonormal,
        it follows that $V$ is an upper triangular partial isometry that satisfies (a) and (b).
        If $v_{n-1,n }=0$, then 
        \begin{equation*}
            V =\left[
            \begin{array}{c|ccccc}
                0 & v_{1,2} & \cdots & v_{1,n-2} & v_{1,n-1} & v_{1,n} \\
                0 & \xi_1 & \cdots & v_{2,n-2} & v_{2,n-1} & v_{2,n} \\[-2pt]
                0 & 0 & \ddots & \vdots & \vdots & \vdots \\
                0 & 0 & \cdots & \xi_{n-2} & v_{n-2,n-1} & v_{n-2,n} \\
                \hline
                0 & 0 & \cdots & 0 & \xi_{n-1} & 0 \\
                0 & 0 & \cdots & 0 & 0 & \xi_n 
            \end{array}
            \right],
        \end{equation*}
        so that the upper right $(n-2) \times (n-1)$ submatrix has orthogonal nonzero columns, which is impossible.
        Thus $v_{n-1,n} \neq 0$, which completes the induction.
    \end{proof}

\begin{lemma}\label{Lemma:JCFB}
    If $T \in M_n$ is upper triangular with $\sigma(T) = \{ \lambda\}$,
    and the entries on the first superdiagonal of $T$ are all nonzero, then $T \sim J_n(\lambda)$.
\end{lemma}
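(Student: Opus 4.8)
The plan is to reduce immediately to the nilpotent case and then compute a single rank. Since $T$ is upper triangular, its diagonal entries are precisely its eigenvalues listed with multiplicity, so $\sigma(T) = \{\lambda\}$ forces every diagonal entry of $T$ to equal $\lambda$. Consequently $N := T - \lambda I$ is strictly upper triangular, hence nilpotent, and since $N$ and $T$ differ only on the diagonal, the first superdiagonal of $N$ coincides with that of $T$ and thus has all entries nonzero. Because similarity is preserved under translation by $\lambda I$, it suffices to prove $N \sim J_n(0)$; adding $\lambda I$ back then gives $T \sim J_n(\lambda)$.

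Now recall, as noted in the introduction, that the number of Jordan blocks of $N$ for the eigenvalue $0$ equals $\nullity N$. A nilpotent matrix in $M_n$ is similar to the single Jordan block $J_n(0)$ exactly when it has one Jordan block, that is, when $\nullity N = 1$, equivalently $\rank N = n-1$. Since $N$ is nilpotent it is singular, so $\rank N \le n-1$ automatically, and the content of the lemma is the reverse inequality $\rank N \ge n-1$.

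To obtain this, I would show that the columns $N\vec{e}_2, N\vec{e}_3, \ldots, N\vec{e}_n$ are linearly independent. Strict upper triangularity means that $N\vec{e}_j$ is supported in rows $1, \ldots, j-1$, and by hypothesis its entry in row $j-1$, namely the superdiagonal entry $N_{j-1,j}$, is nonzero. Given a relation $\sum_{j=2}^n c_j N\vec{e}_j = \vec{0}$, reading off the coordinate in row $n-1$ leaves only the term $c_n N_{n-1,n}$, forcing $c_n = 0$; then the coordinate in row $n-2$ forces $c_{n-1} = 0$; continuing by downward induction on the row index yields $c_n = c_{n-1} = \cdots = c_2 = 0$. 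Hence $\rank N \ge n-1$, so $\rank N = n-1$ and $N \sim J_n(0)$, which completes the proof. The argument is essentially a triangular back-substitution, and the only thing to watch is the bookkeeping of indices; alternatively one could package the same computation by verifying that $\vec{e}_n$ is a cyclic vector for $N$, so that $\vec{e}_n, N\vec{e}_n, \ldots, N^{n-1}\vec{e}_n$ is a basis. In either form there is no real obstacle here.
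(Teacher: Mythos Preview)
Your argument is correct and follows the same approach as the paper: both establish that $\rank(T-\lambda I)=n-1$, forcing a single Jordan block. The paper simply asserts this rank equality in one line, whereas you supply the back-substitution details showing that the last $n-1$ columns of $N=T-\lambda I$ are independent.
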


\begin{proof}
    By hypothesis, $\rank (T - \lambda I) = n-1$.  Thus the Jordan canonical form of $T$
    consists of the single block $J_n(\lambda)$.
\end{proof}

        The following lemma is \cite[Theorem 2.4.6.1]{HJ}:
	
	\begin{lemma}\label{Lemma:Jordan}
		Suppose that $T = [T_{ij}]_{i,j}^{d}\in M_n$ is block upper triangular,
		and each $T_{ii} \in M_{n_{i}}(\C)$ is upper triangular with all diagonal entries equal to $\lambda_i$.
		If $\lambda_i \neq \lambda_j$
		for $i \neq j$, then $T \sim T_{11} \oplus T_{22} \oplus \cdots \oplus T_{dd}$.
	\end{lemma}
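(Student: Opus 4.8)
This is a standard fact (hence the citation to \cite{HJ}); here is how the classical argument runs. The plan is to strip off the diagonal blocks one at a time, so it suffices, by induction on $d$ (the case $d=1$ being trivial), to prove the two‑block case: if $T = \minimatrix{A}{R}{0}{B}$ with $\sigma(A) = \{\lambda_1\}$, $\sigma(B) = \{\lambda_2\}$, and $\lambda_1 \neq \lambda_2$, then $T \sim A \oplus B$. Granting this, I would write the general $T$ as $\minimatrix{T_{11}}{R}{0}{T'}$, where $T'$ is the block upper triangular matrix with diagonal blocks $T_{22}, \ldots, T_{dd}$. Since $T'$ is itself upper triangular, $\sigma(T') = \{\lambda_2, \ldots, \lambda_d\}$, which omits the single eigenvalue $\lambda_1$ of $T_{11}$; hence the two‑block case gives $T \sim T_{11} \oplus T'$, and the induction hypothesis applied to $T'$ finishes the proof.

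For the two‑block case I would look for a similarity that annihilates the corner block $R$. Conjugating by the block‑unipotent matrix $S = \minimatrix{I}{X}{0}{I}$, whose inverse is $S^{-1} = \minimatrix{I}{-X}{0}{I}$, a routine block multiplication gives
\[
    S^{-1} T S = \minimatrix{A}{AX - XB + R}{0}{B},
\]
so the task reduces to choosing $X$ with $AX - XB = -R$, that is, to solving a Sylvester equation.

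The crux is the solvability of this Sylvester equation. Let $\Phi$ denote the linear operator $X \mapsto AX - XB$ on the space of $n_1 \times (n - n_1)$ complex matrices. After vectorizing, $\Phi$ is represented by $I \otimes A - B^{\mathsf T} \otimes I$, and these two terms commute, so $\sigma(\Phi) \subseteq \sigma(A) - \sigma(B) = \{\lambda_1 - \lambda_2\}$; since $\lambda_1 \neq \lambda_2$, this forces $\Phi$ to be invertible, and $X := -\Phi^{-1}(R)$ solves the equation. (In the situation of the lemma $A$ and $B$ are already upper triangular, so one can see this even more concretely: in the basis of matrix units, suitably ordered, $\Phi$ is triangular with every diagonal entry equal to $\lambda_1 - \lambda_2 \neq 0$, hence $\det \Phi \neq 0$.) This spectral fact about $\Phi$ is the only step requiring any care; the two reductions above and the block identity are purely formal.
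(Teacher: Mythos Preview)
Your proof is correct and is exactly the classical Sylvester-equation argument one finds in the cited source \cite[Theorem~2.4.6.1]{HJ}; the paper itself gives no proof beyond that citation. One cosmetic point: you state the two-block case for $\sigma(A)=\{\lambda_1\}$ and $\sigma(B)=\{\lambda_2\}$, but in the induction you invoke it with $B=T'$, whose spectrum is $\{\lambda_2,\dots,\lambda_d\}$. This is harmless, since your Sylvester argument (and the Kronecker-product computation $\sigma(\Phi)\subseteq\sigma(A)-\sigma(B)$) only uses $\sigma(A)\cap\sigma(B)=\varnothing$; just restate the two-block case with that hypothesis and the induction goes through verbatim.
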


\begin{proof}[Proof of Theorem \ref{Theorem:PI}]
($\Rightarrow$) Conditions (a), (b), and (c) are invariant under similarity, so it suffices to show that they are satisfied by any partial isometry $V$.  Since $V$ is a contraction, it follows that $\sigma(V) \subseteq \D^-$.  This is (a).

Now suppose that 
$\zeta \in \sigma(V) \cap \T$ has algebraic multiplicity $m$.  By Schur's Theorem on unitary
triangularization \cite[Theorem 2.3.1]{HJ},
we may assume that $V$ has the form
\begin{equation*}
V = \minimatrix{V_{11}}{V_{12}}{0}{V_{22}}, 
\end{equation*}
in which $V_{11} \in M_m$ and $V_{22} \in M_{n-m}$ are upper triangular, $\sigma(V_{11}) = \{\zeta\}$,
and $\zeta \notin \sigma(V_{22})$.  This entails that the diagonal entries of $V_{11}$ are all $\zeta$.  
Since $V$ is a contraction, each of its columns and rows has norm at most one, forcing $V_{11} = \zeta I_m$ and $V_{12} = 0$.  Then
$V = \zeta I_m \oplus V_{22}$.  Since $\zeta \notin \sigma(V_{22})$, it follows that
the algebraic and geometric multiplicities of $\zeta$, as an eigenvalue of $V$, are equal.
Thus (b) holds.

Finally, let $r = \rank V$ and write $V=UP$ for some unitary $U$ and projection $P$ of rank $r$.  If $\lambda \in \D$, then $U - \lambda I$ is nonsingular and hence
\begin{align*}
    n &= \rank( U - \lambda I)  =\rank[ (UP - \lambda I) + U(I-P)]  \\ &\leq \rank(UP - \lambda I) + \rank(U(I-P)) = \rank(V- \lambda I) + n-r.
\end{align*}
Thus $\rank V = r \leq \rank (V - \lambda I)$, or in other words $\nullity V \geq \nullity (V - \lambda I)$.  This proves (c).
\medskip

\noindent ($\Leftarrow$)
Suppose that $A \in M_n$ satisfies (a), (b), and (c).
If $\sigma(A) \cap \T = \{ \zeta_1, \zeta_2,\ldots, \zeta_s\}$,
in which $\zeta_1,\zeta_2,\ldots,\zeta_s$ are distinct and have algebraic multiplicities
$m_1,m_2,\ldots,m_s$, respectively, then
\begin{equation*}
    A \sim A' \oplus \zeta_1 I_{m_1} \oplus \zeta_2 I_{m_2} \oplus \cdots \oplus \zeta_s I_{m_s}, 
\end{equation*}
in which $\zeta_1,\zeta_2,\ldots, \zeta_s \notin \sigma(A')$ (it is possible that $A'$ is vacuous, in which case $A$ is similar
to a unitary matrix).  
Since each summand $\zeta_i I_{m_i}$
is an isometry, $A$ is similar to a partial isometry if $A'$ is.  We therefore assume that
$\sigma(A) \cap \T = \varnothing$.  This ensures that $A$ has an eigenvalue in $\D$,
so (c) implies that $0 \in \sigma(A)$.

Let $m = \nullity A$, which equals the number of Jordan blocks for the eigenvalue $0$
in the Jordan canonical form of $A$.  Condition (c) ensures that the Jordan canonical form
of $A$ has at most $m$ Jordan blocks corresponding to any nonzero eigenvalue of $A$.
It therefore suffices to show that any matrix of the form
\begin{equation}\label{eq:ASDSMFB}
B = J_{n_0}(0) \oplus \bigoplus_{i=1}^d J_{n_i}(\lambda_i),\qquad d \geq 0, \quad 0 < n_i \leq n,
\end{equation}
in which 
$\lambda_1,\lambda_2,\ldots, \lambda_d \in \D \backslash\{0\}$ are distinct, is similar to a partial
isometry.  Indeed, $A$ is similar to a direct sum of matrices of the form \eqref{eq:ASDSMFB}.

Lemma \ref{Lemma:Superdiagonal} ensures that there exists
a partial isometry $V \in M_n$ 
with nonzero entries on the first superdiagonal,  
whose diagonal entries are (in order)
\begin{equation*}
\underbrace{ 0,0,\ldots,0}_{\text{$n_0$ times}}, \underbrace{ \lambda_1,\lambda_1,\ldots,\lambda_1}_{\text{$n_1$ times}}, 
\underbrace{\lambda_2,\lambda_2,\ldots,\lambda_2}_{\text{$n_2$ times}},\ldots,
\underbrace{\lambda_d,\lambda_d,\ldots,\lambda_d}_{\text{$n_d$ times}}.
\end{equation*}
Partition $V$ conformally with $B$, so that $V_{i,j} \in M_{n_i \times n_j}$.
Lemma \ref{Lemma:JCFB} ensures that $V_{i,i} \sim J_{n_i}(\lambda_i)$, so
$V \sim B$ by Lemma \ref{Lemma:Jordan}.  
\end{proof}

\noindent\textbf{Remark}.
A general result about spectral matrices implies 
that a partial isometry satisfies (b) of Theorem \ref{Theorem:PI}.
A matrix is \emph{spectral} if its spectral radius equals its numerical radius.
A partial isometry $V \in M_n$ that has an eigenvalue $\zeta$ of unit modulus 
has spectral radius and numerical radius $1$ and is therefore spectral.
If the multiplicity of $\zeta$ is $m$, then \cite[Theorem 1]{Goldberg} ensures that $V \cong \zeta I_m \oplus V_{22}$,
in which $V_{22} \in M_{n-m}$ is an upper triangular partial isometry that does not have $\zeta$ as an eigenvalue.
See also \cite[Section 1.5, Problems 24 \& 27]{Topics}.

\section{Noncommutative *-varieties}

The common operator solution set of a finite collection of *-polynomial equations in some number of variables is a \textit{noncommutative *-variety}; when the polynomials do not involve adjoints, it is a \textit{noncommutative variety}.  (This term is in general use, although the precise definition varies from paper to paper.)   Noncommutative varieties are invariant under similarity, but noncommutative *-varieties typically are invariant only under unitary similarity.  Theorem \ref{Theorem:PI} describes the similarity orbit of the matricial part of the noncommutative *-variety determined by the polynomial $x x^* x - x$.

Many other sets can be described as *-varieties.  Hermitian operators correspond to $x-x^*$, normal operators to $x^*x-xx^*$.  It is easy to describe the similarity orbit of the normal matrices: since these are unitarily diagonalizable, it means exactly that for each eigenvalue, the algebraic and geometric multiplicities are equal.  For Hermitian matrices, one adds the requirement that all eigenvalues are real.  Here is another example that is not too different from Theorem \ref{Theorem:PI}, making one small change to the *-polynomial determining partial isometries.

\begin{proposition}
A matrix $T$ is similar to a solution of $x x^*x - x^2=0$ if and only if
\begin{enumerate}\addtolength{\itemsep}{0.5\baselineskip}
\item the spectrum of $T$ is contained in $[0,1]$;
\item for each eigenvalue, the geometric and algebraic multiplicities are equal;
\item the nullity of $T$ is at least as great as the sum of the nullities of $T-cI$ for all $c \in (0,1)$.
\end{enumerate}
\end{proposition}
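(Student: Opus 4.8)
The plan is to derive both implications from a description of the solution set of $xx^*x-x^2=0$ furnished by the polar decomposition. Write a solution $V\in M_n$ as $V=UP$ with $U$ unitary and $P=(V^*V)^{1/2}\geq 0$ (in finite dimensions the partial-isometry factor of the polar decomposition extends to a unitary). Decompose $\C^n=\ran P\oplus\ker P$, so that $P=P_{11}\oplus 0$ with $P_{11}>0$ on $\ran P$, and write $U=[U_{ij}]$ conformally. Since $V^*=PU^*$, the equation $VV^*V=V^2$ reads $UP^3=UPUP$, equivalently $P^3=PUP$; computing both sides blockwise and using that $P_{11}$ is invertible on $\ran P$, this is equivalent to the single condition $U_{11}=P_{11}$. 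So the first step is to record the equivalence: $V$ solves $xx^*x-x^2=0$ if and only if the $(1,1)$-block of its unitary polar factor coincides with the positive definite operator $P_{11}$. In that case $V=UP$ is block lower triangular with respect to $\ran P\oplus\ker P$, with $(1,1)$-block $P_{11}^2$ and $(2,2)$-block $0$.

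For the forward direction, since conditions (a)--(c) are similarity invariants it suffices to verify them for a solution $V$. From the block form, $\sigma(V)=\sigma(P_{11}^2)\cup\{0\}$, and the nonzero eigenvalues of $V$ are exactly those of $P_{11}^2$ (with multiplicity); since $P_{11}=U_{11}$ is a compression of a unitary, hence a contraction, $\sigma(P_{11}^2)\subseteq(0,1]$, which gives (a). As $P_{11}^2$ is positive definite, its eigenvectors lift to eigenvectors of $V$ for its nonzero eigenvalues; together with $\ker V=\{0\}\oplus\ker P$, a dimension count shows $V$ is diagonalizable, and since its spectrum is real this gives (b). For (c) --- the crux --- one has $\nullity V=\dim\ker P$, while $\sum_{c\in(0,1)}\nullity(V-cI)$ equals the number of eigenvalues of $P_{11}$ in $(0,1)$, counted with multiplicity, namely $\dim\ran P-\dim\ker(I-P_{11})$. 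Now $\ker(I-P_{11})=\ker(I-P_{11}^2)=\ker(I-U_{11}^*U_{11})$, and unitarity of $U$ gives $I-U_{11}^*U_{11}=U_{21}^*U_{21}$, which has rank at most $\rank U_{21}\leq\dim\ker P$. Hence $\dim\ker(I-P_{11})\geq\dim\ran P-\dim\ker P$, so $\sum_{c\in(0,1)}\nullity(V-cI)\leq\dim\ker P=\nullity V$, which is (c). I expect this to be the main obstacle: condition (c) amounts to the assertion that a $p\times p$ corner of an $n\times n$ unitary fails to be isometric on a subspace of dimension at most $n-p$, and the point is to see it through the identity $I-U_{11}^*U_{11}=U_{21}^*U_{21}$.

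For the converse, suppose $T$ satisfies (a)--(c). By (a) and (b), $T\sim 0_{n_0}\oplus c_1I_{m_1}\oplus\cdots\oplus c_sI_{m_s}\oplus I_{n_1}$ with the $c_i\in(0,1)$ distinct, and (c) says $n_0\geq m_1+\cdots+m_s$. The solution set of $xx^*x-x^2=0$ is closed under direct sums (since $(V_1\oplus V_2)(V_1\oplus V_2)^*(V_1\oplus V_2)=V_1V_1^*V_1\oplus V_2V_2^*V_2$), and $0_k$ and $I_{n_1}$ are solutions, so it is enough to exhibit, for each $c\in(0,1)$, a solution similar to $\minimatrix{c}{0}{0}{0}$. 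The matrix $\minimatrix{0}{\sqrt{c(1-c)}}{0}{c}$ does the job: a direct check gives $VV^*V=V^2$, and it has eigenvalues $0$ and $c$, hence is similar to $\diag(0,c)$. Taking $m_i$ copies of this $2\times 2$ block for each $i$, adjoining $0_{n_0-(m_1+\cdots+m_s)}$ (legitimate by (c)) and $I_{n_1}$, and forming the direct sum produces a solution similar to $T$. This direction should be routine once the $2\times 2$ building block is identified; the genuine content lies in the verification of (c).
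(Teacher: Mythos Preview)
Your argument is correct and genuinely different from the paper's.  The paper first quotes a result of Radjavi--Williams that the solution set of $xx^*x-x^2=0$ consists precisely of products $PQ$ of two orthogonal projections, then invokes Halmos' two-projections theorem to put $P$ and $Q$ in a joint canonical form, from which the Jordan form of $PQ$ can be read off directly.  You instead work intrinsically with the polar decomposition $V=UP$, reduce the equation to the single constraint $U_{11}=P_{11}$ on the block over $\ran P$, and extract (a)--(c) from the resulting block lower-triangular form; the key step, establishing (c) via $I-U_{11}^*U_{11}=U_{21}^*U_{21}$ and the rank bound $\rank U_{21}\le\dim\ker P$, is exactly where the two-projections machinery does its work in the paper's version.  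Your route is more self-contained (no external citations needed) and makes the obstruction behind (c) transparent as a rank inequality for a corner of a unitary; the paper's route is shorter once the cited structure theorems are granted and has the virtue of tying the problem to the classical two-subspace picture.  One small point: your claim $\sigma(V)=\sigma(P_{11}^2)\cup\{0\}$ tacitly assumes $\ker P\neq 0$; when $\ker P=0$ the constraint forces $P=U=I$, so $V=I$ and (a)--(c) hold trivially.
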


\begin{proof}
By \cite[Theorem 8]{RW}, the noncommutative *-variety determined by $xx^*x-x^2$ consists exactly of products of pairs of orthogonal
projections.  We proceed by determining the Jordan form of a product of two orthogonal projections $P,Q$.

From Halmos' two projections theorem (\!\cite[Theorem 2]{2sub}, see also \cite[Theorem 1.1 and Corollary 2.2]{Bottcher}), we deduce that 
\begin{align*}
&P \cong I_{d_1} \oplus I_{d_2} \oplus 0_{d_3} \oplus 0_{d_4} \oplus \bigoplus_{j=1}^n \left[\begin{smallmatrix} 1 & 0 \\ 0 & 0 \end{smallmatrix} \right] , \\ &Q \cong I_{d_1} \oplus 0_{d_2} \oplus I_{d_3} \oplus 0_{d_4} \oplus \bigoplus_{j=1}^n\left[\begin{smallmatrix} c_j^2 & c_j \sqrt{1 - c_j^2} \\ c_j \sqrt{1 - c_j^2} & 1 - c_j^2 \end{smallmatrix}\right],
\end{align*}
where $c_j \in (0,1)$, $d_1,d_2,d_3,d_4 \geq 0$, and $n \geq 0$.  Thus
$$PQ \cong I_{d_1} \oplus 0_{d_2} \oplus 0_{d_3} \oplus 0_{d_4} \oplus \bigoplus_{j=1}^n \left[\begin{smallmatrix} c_j^2 & c_j \sqrt{1 - c_j^2} \\ 0 & 0 \end{smallmatrix} \right],$$
which is similar to
\begin{equation}\label{eq:LJCF}
I_{d_1} \oplus 0_{d_2} \oplus 0_{d_3} \oplus 0_{d_4} \oplus \bigoplus_{j=1}^n \left[\begin{smallmatrix} c_j^2 & 0 \\ 0 & 0 \end{smallmatrix}\right].
\end{equation}
In a basis of eigenvectors of $PQ$, each eigenvector for $c_j^2 \in (0,1)$ is paired with an eigenvector for $0$.  This yields (a), (b), and (c).

Conversely, any Jordan form satisfying (a), (b), and (c) can be written as in \eqref{eq:LJCF}
and is similar to the product of orthogonal projections $P$ and $Q$ as above.
\end{proof}

\noindent\textbf{Remark.} 
Taking the similarity orbit is one way to change a noncommutative *-variety.  Another way is to replace the defining equalities with similarities.  This generally produces a different set, not necessarily smaller or larger than the similarity orbit, and also not canonical because it requires a choice about how to write the original equations.  For instance, the same *-variety is determined by $x - x^* = 0$ or $x = x^*$; it could become $x - x^* \sim 0$ (which is still just the set of Hermitian operators) or $x \sim x^*$. The latter set has been much studied,
sometimes under the name \emph{generalized Hermitian operators},
and is larger than the similarity orbit of the Hermitian operators.  
Here is one comparison: a matrix is similar to its adjoint if and only if it is the product of two Hermitian matrices, while it is similar to a Hermitian matrix if and only if it is the product of a positive definite and a Hermitian \cite{W,RW}.  
See \cite{SM} for a modern treatment of generalized Hermitian operators.

Applying this variation to our Theorem \ref{Theorem:PI}, let $\SSS$ be the set of matrices satisfying $TT^*T \sim T$.  Unlike the other sets considered in this paper, it appears that
$\SSS$ might not have an alternative easy description.  It contains $[\begin{smallmatrix} 1 & 1 \\ 0 & -1 \end{smallmatrix}]$ (not a partial isometry), but it does not contain $[\begin{smallmatrix} \frac{1}{\sqrt{2}} & 1 \\ 0 & 0 \end{smallmatrix}]$, which is similar to the partial isometry $[\begin{smallmatrix} \frac{1}{\sqrt{2}} & \frac{1}{\sqrt{2}} \\ 0 & 0 \end{smallmatrix}]$.  Thus $\SSS$ is distinct from the set of partially isometric matrices and its similarity orbit, and in fact $\SSS$ is not closed under similarity.

\bibliography{MSPI}{}
\bibliographystyle{plain}

\end{document}